\newtheorem{theorem}{Theorem}
\newtheorem{proposition}[theorem]{Proposition}
\newtheorem{corollary}[theorem]{Corollary}
\def\barr{\begin{array}}
\def\earr{\end{array}}
\title{Breaking points in the poset of conjugacy classes of subgroups of a finite group}
\author{Marius T\u arn\u auceanu}
\date{February 2, 2018}
\begin{document}

\maketitle

\begin{abstract}
    In this note, we determine the finite groups whose poset of conjugacy classes of
    subgroups has breaking points. This leads to a new characterization of the
    generalized quaternion $2$-groups. A generalization of this property is also studied.
\end{abstract}

{\small
\noindent
{\bf MSC 2010\,:} Primary 20D30; Secondary 20D15, 20E15.

\noindent
{\bf Key words\,:} breaking point, poset of conjugacy classes of subgroups, interval, generalized quaternion $2$-group.}

\section{Introduction}

Let $G$ be a finite group and $L(G)$ be the subgroup lattice of $G$. The starting point for our discussion is given by \cite{2},
where the proper nontrivial subgroups $H$ of $G$ satisfying the condition
$$\text{for every}\hspace{1mm} X\in L(G)\hspace{1mm} \text{we have either}\hspace{1mm} X\leq H\hspace{1mm} \text{or}\hspace{1mm} H\leq X \leqno(1)$$have been studied. Such a subgroup is called a \textit{breaking point} for the lattice $L(G)$, and a group $G$ whose subgroup lattice possesses breaking points is called a \textit{BP-group}. Clearly, all cyclic $p$-groups of order at least $p^2$ and all generalized quaternion $2$-groups $Q_{2^n}=\langle a,b \,|\, a^{2^{n-2}}= b^2, a^{2^{n-1}}=1, b^{-1}ab=a^{-1}\rangle$, $n\geq 3$, are BP-groups. Note that a complete classification of BP-groups can be found in \cite{2}. Also, we observe that the condition (1) is equivalent to $$L(G)=[1,H]\cup[H,G],\leqno(2)$$where for $X,Y\in L(G)$ with $X\subseteq Y$ we denote by $[X,Y]$ the interval in $L(G)$ between $X$ and $Y$. A natural generalization of (2) has been suggested by Roland Schmidt, namely
$$L(G)=[1,M]\cup[N,G] \mbox{ with } 1<M,N<G,\leqno(3)$$and the abelian groups $G$ satisfying (3) have been determined in \cite{1}.

The above concepts can be naturally extended to other remarkable posets of subgroups of $G$, such us the posets $C(G)$ and $\overline{C}(G)$ of cyclic subgroups and of conjugacy classes of cyclic subgroups of $G$, respectively. We recall here that the generalized quaternion $2$-groups
$Q_{2^n}$, $n\geq 3$, can be characterized as being the unique finite non-cyclic groups $G$ for which $C(G)$ and $\overline{C}(G)$ have breaking points (see \cite{7} and \cite{3}, respectively).

In the current note, we will focus on the set of conjugacy classes of subgroups of $G$. It is defined by $$\overline{L}(G)=\{[H] \,|\, H\in L(G)\}, \mbox{ where } [H]=\{H^x\,|\, x\in G\},\, \forall\, H\in L(G).$$Under the ordering relation
$$[H_1]\leq [H_2]\mbox{ if and only if } H_1\subseteq H_2^x \mbox{ for some } x\in G$$this is a poset with the least element $[1]=\{1\}$ and the greatest element $[G]=\{G\}$. We will prove that the cyclic $p$-groups of order at least $p^2$ and the generalized quaternion $2$-groups are the unique finite groups $G$ for which $\overline{L}(G)$ has breaking points. The more general problem of finding the finite groups $G$ such that $\overline{L}(G)$ is a union of two proper intervals will be also addressed.

Most of our notation is standard and will usually not be repeated here. Elementary concepts and results on group theory
can be found in \cite{4,6}. For subgroup lattice notions we refer the reader to \cite{5,8} .

\section{Main results}

Our main theorem is the following.

\begin{theorem}
    Let $G$ be a finite group and $\overline{L}(G)$ be the poset of conjugacy classes of subgroups of $G$. Then
    $\overline{L}(G)$ possesses breaking points if and only if $G$ is either a cyclic $p$-group of order at least
    $p^2$ or a generalized quaternion $2$-group.
\end{theorem}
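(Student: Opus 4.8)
The plan is to prove the two implications separately, with the forward implication carrying essentially all the weight. For the ``if'' direction: if $G$ is a cyclic $p$-group of order $p^n$ with $n\ge 2$, then $G$ is abelian, conjugacy of subgroups is trivial, so $\overline{L}(G)=L(G)$ is the chain $1<C_1<\cdots<C_n=G$, and any $C_i$ with $1\le i\le n-1$ is visibly a breaking point. If $G=Q_{2^n}$ with $n\ge 3$, I would use the classical fact that $G$ has a unique subgroup $Z$ of order $2$ (its centre); then every nontrivial subgroup contains $Z$, so $[Z]\le[X]$ whenever $X\ne 1$ and $[X]=[1]\le[Z]$ when $X=1$, making $[Z]$ a breaking point.

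For the ``only if'' direction, assume $[H]$ is a breaking point, so $1<H<G$ and every subgroup $X$ of $G$ satisfies $[X]\le[H]$ or $[H]\le[X]$. First I would show $G$ is a $p$-group. If not, pick distinct primes $p,q$ dividing $|G|$ and Sylow subgroups $S_p,S_q$. Comparing $[S_p]$ with $[H]$: the relation $[H]\le[S_p]$ would force $H$ to be a nontrivial $p$-group, and then neither $[S_q]\le[H]$ nor $[H]\le[S_q]$ could hold (incompatible primes), contradicting that $[H]$ is a breaking point; hence $[S_p]\le[H]$, i.e.\ a conjugate of $S_p$ lies in $H$, so $|H|_p=|G|_p$. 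Running this over all primes dividing $|G|$ gives $|H|=|G|$, a contradiction. So $|G|=p^n$.

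Next comes the heart of the argument: showing $G$ has a unique subgroup of order $p$. If $|H|=p$, then comparing $[P]$ with $[H]$ for each subgroup $P$ of order $p$ forces $P$ to be conjugate to $H$, so the subgroups of order $p$ form a single conjugacy class, of $p$-power size; since the number of subgroups of order $p$ in a $p$-group is $\equiv1\pmod p$, that number is $1$. If instead $|H|=p^k$ with $k\ge2$, I would first get $H\trianglelefteq G$: comparing $[K]$ with $[H]$ for a subgroup $K$ of order $p^k$ shows every such $K$ is conjugate to $H$, so the number of subgroups of order $p^k$ (which is $\equiv1\pmod p$) equals the $p$-power $|G:N_G(H)|$, forcing $H\trianglelefteq G$; then $H$ is the unique subgroup of its order, and for $|K|<p^k$ the relation $[H]\le[K]$ is impossible, so $[K]\le[H]$ and normality gives $K\le H$ — thus $H$ absorbs every subgroup of order $<p^k$. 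Consequently every element of $G\setminus H$ has order $\ge p^{k+1}$ (a smaller-order element generates a subgroup that must lie in $H$, or one of order $p^k$ that must equal $H$). Picking $g\in G\setminus H$ of order $p^m$ ($m\ge k+1$) and writing $\langle g\rangle\cap H=\langle g^{p^i}\rangle$ with $i\ge1$: since $\langle g^{p^i}\rangle\le H$ we get $i\ge m-k$, while $g^{p^{i-1}}\notin H$ (otherwise it would lie in $\langle g^{p^i}\rangle$, impossible by order) has order $p^{m-i+1}\ge p^{k+1}$, giving $i\le m-k$; hence $i=m-k$ and $|\langle g\rangle\cap H|=p^k=|H|$, so $H\le\langle g\rangle$. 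Thus $H$ is cyclic, it has a unique subgroup of order $p$, and — as $H$ contains all subgroups of $G$ of order $p$ — so does $G$.

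Finally I would invoke the classical classification of finite $p$-groups with a unique subgroup of order $p$ (cyclic for odd $p$; cyclic or generalized quaternion for $p=2$); since $1<H<G$, the cyclic case has $|G|\ge p^2$, which yields exactly the stated list. I expect the main obstacle to be the case $|H|\ge p^2$ of the second step, where one must establish at once that $H$ is normal, is cyclic, and swallows all small subgroups; the decisive ingredient is the order estimate forcing $H\le\langle g\rangle$ for every $g\notin H$, and the rest leans on the two standard facts quoted (the $\equiv1\pmod p$ count of subgroups of a fixed order in a $p$-group, and the classification just mentioned).
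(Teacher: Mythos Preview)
Your proof is correct. The ``if'' direction and the reduction to $p$-groups match the paper's argument essentially verbatim. The divergence comes after you establish that all subgroups of order $|H|=p^k$ are conjugate. The paper observes that a finite $p$-group has a \emph{normal} subgroup of every possible order, so a normal subgroup of order $p^k$ exists and must coincide with $H$; hence $H$ is the unique subgroup of its order and therefore already a breaking point of $L(G)$ itself, at which point the paper simply invokes the C\u alug\u areanu--Deaconescu classification of BP-groups (reference~[2]) and is done. You reach the same uniqueness via the congruence $\#\{K\le G:|K|=p^k\}\equiv 1\pmod p$ together with the conjugacy-class size being a $p$-power, and then --- rather than citing~[2] --- you run a direct argument (splitting on $k=1$ versus $k\ge 2$, and in the latter case proving $H\le\langle g\rangle$ for every $g\notin H$) to force a unique subgroup of order $p$, finally appealing to the textbook classification of $p$-groups with a single minimal subgroup. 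Your route is longer but more self-contained: it rests only on standard facts found in Suzuki or Isaacs, whereas the paper's shortcut trades that extra work for the external reference~[2]. Note, incidentally, that once you know $H$ is the unique subgroup of order $p^k$ you could bypass the element-order computation entirely: every subgroup of order $\le p^k$ sits inside some subgroup of order $p^k$ (normalizers grow), hence inside $H$, and every subgroup of order $\ge p^k$ contains one of order $p^k$, hence contains $H$ --- so $H$ is a breaking point of $L(G)$ and you land exactly where the paper does.
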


\begin{proof}
    Clearly, if $G$ is a cyclic $p$-group of order at least $p^2$, then $\overline{L}(G)$ possesses breaking points. Also,
    if $G$ is a generalized quaternion $2$-group, then $G$ has exactly one subgroup of order $p$ (see e.g. (4.4) of \cite{6}, II), and the conjugacy class of this subgroup is a breaking point for $\overline{L}(G)$.

    Conversely, assume that $\overline{L}(G)$ possesses a breaking point, say $[H]$. Then $G$ is a $p$-group. Indeed, if the order of $G$ has at least two distinct prime divisors, then $H$ cannot be a $p$-subgroup. It follows that $[S]\leq [H]$ for any Sylow subgroup $S$ of $G$. This leads to $H=G$, a contradiction.

    Let $|G|=p^n$ and $|H|=p^m$, where $p$ is a prime and $1\leq m\leq n-1$. Then for every subgroup $K\leq G$ of order $p^m$ we have either $[K]\leq [H]$ or $[H]\leq [K]$, implying that $[K]=[H]$. This shows that all subgroups of order $p^m$ of $G$ are conjugate to $H$. On the other hand, since $G$ has a normal subgroup of order $p^m$, we infer that $H$ is in fact the unique subgroup of order $p^m$ of $G$, i.e. a breaking point for $L(G)$. Thus $G$ is a BP-group, and by Theorem 1.1 of \cite{2} it follows that $G$ is either a cyclic $p$-group of order at least $p^2$ or a generalized quaternion $2$-group. This completes the proof.
\end{proof}

The following corollary is obtained directly from Theorem 1.

\begin{corollary}
    The generalized quaternion $2$-groups are the unique finite noncyclic groups whose
    poset of conjugacy classes of subgroups has breaking points.
\end{corollary}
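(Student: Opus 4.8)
The plan is to deduce this directly from Theorem~1. That result identifies the finite groups $G$ for which $\overline{L}(G)$ has breaking points as precisely the cyclic $p$-groups of order at least $p^2$ together with the generalized quaternion $2$-groups $Q_{2^n}$, $n\geq 3$. Among the members of this list, the cyclic $p$-groups are by definition cyclic, hence excluded the moment we add the hypothesis that $G$ be noncyclic, whereas each $Q_{2^n}$ is non-abelian — so certainly noncyclic — for every $n\geq 3$. Therefore the only noncyclic groups appearing in the classification of Theorem~1 are the generalized quaternion $2$-groups, which is one inclusion of the desired characterization.

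For the reverse inclusion I would simply invoke Theorem~1 again (or, equivalently, the first paragraph of its proof, where the conjugacy class of the unique subgroup of order $2$ of $Q_{2^n}$ is exhibited as a breaking point of $\overline{L}(Q_{2^n})$) to note that every $Q_{2^n}$ does possess a breaking point in its poset of conjugacy classes of subgroups, and that $Q_{2^n}$ is noncyclic. Combining the two inclusions yields the stated result.

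There is essentially no obstacle here: the corollary is a bookkeeping consequence of Theorem~1, the only substantive observation being that the cyclic/noncyclic dichotomy separates the cyclic $p$-groups from the generalized quaternion $2$-groups inside that classification. The single point worth recording explicitly is that $Q_{2^n}$ is indeed noncyclic — i.e.\ non-abelian — for all $n\geq 3$, which is immediate from the defining relation $b^{-1}ab=a^{-1}$ together with the fact that $a$ has order $2^{n-1}>2$.
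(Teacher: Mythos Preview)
Your proposal is correct and matches the paper's approach: the paper simply states that the corollary ``is obtained directly from Theorem~1,'' and your argument spells out precisely that deduction. The only content beyond Theorem~1 is the observation that $Q_{2^n}$ is noncyclic, which you justify.
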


By bringing together Theorem 1 and the main results of \cite{2,3,7}, one obtains:

\begin{corollary}
    Let $G$ be a finite group and $L(G)$, $\overline{L}(G)$, $C(G)$, $\overline{C}(G)$ be the posets of subgroups, of conjugacy classes of subgroups, of cyclic subgroups, and of conjugacy classes of cyclic subgroups of $G$, respectively. Then the following conditions are equivalent:
    \begin{itemize}
    \item[{\rm a)}] $L(G)$ has breaking points.
    \item[{\rm b)}] $\overline{L}(G)$ has breaking points.
    \item[{\rm c)}] $C(G)$ has breaking points.
    \item[{\rm d)}] $\overline{C}(G)$ has breaking points.
    \end{itemize}
\end{corollary}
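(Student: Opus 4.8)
The plan is to prove the equivalence by showing that conditions (a)--(d) all characterize the same class of groups, namely cyclic $p$-groups of order at least $p^2$ together with generalized quaternion $2$-groups. First I would invoke Theorem~1.1 of \cite{2} to identify condition (a) with this class, and Theorem~1 of the present note to identify condition (b) with the same class; this immediately gives a) $\Leftrightarrow$ b). Next I would cite the main result of \cite{7}, which states that among finite non-cyclic groups the generalized quaternion $2$-groups are the only ones for which $C(G)$ has breaking points, noting separately that a cyclic $p$-group of order at least $p^2$ plainly has breaking points in $C(G)$ (indeed $C(G)=L(G)$ in that case), and that no other cyclic group does, since $C(G)$ is then a chain precisely when $G$ is a cyclic $p$-group; this yields c) $\Leftrightarrow$ a). Finally, the main result of \cite{3} gives the analogous statement for $\overline{C}(G)$, producing d) $\Leftrightarrow$ a), and the four-way equivalence follows by transitivity.

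The only genuine subtlety is to make sure that the characterizations quoted from \cite{2,3,7} are each stated with \emph{exactly} the same exceptional families, so that no group slips through in one poset but not another. In particular one must check that cyclic $p$-groups are handled uniformly: for such a group all four posets coincide with the chain $L(G)$, every proper nontrivial subgroup is a breaking point, and conversely a cyclic group whose $C(G)$ or $\overline{C}(G)$ has a breaking point must be a $p$-group of order at least $p^2$ because otherwise these posets contain an antichain of size at least two (two incomparable cyclic subgroups of coprime prime order, or more generally two maximal subgroups neither containing the other). I would record this observation once, at the start of the argument, so that the subsequent appeals to the three cited theorems line up cleanly.

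The main obstacle, such as it is, is purely bibliographic and expository rather than mathematical: it lies in reconciling the precise formulations in \cite{2,3,7}---which may phrase their results in terms of "BP-groups", or restrict attention to non-cyclic groups, or use slightly different normalizations---with the clean four-item list in the statement. Once the exceptional classes are seen to agree, the corollary is immediate and no further computation is needed.

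\begin{proof}
For a cyclic $p$-group $G$ of order at least $p^2$ all four posets coincide with the chain $L(G)$, and every proper nontrivial subgroup is a breaking point; conversely, if $G$ is cyclic but not a $p$-group of order at least $p^2$, then $G$ has two maximal subgroups of coprime index, which form an antichain in each of the four posets, so none of them has a breaking point. Hence, among cyclic groups, each of a), b), c), d) holds exactly for the $p$-groups of order at least $p^2$. Among non-cyclic groups, a) holds exactly for the generalized quaternion $2$-groups by Theorem~1.1 of \cite{2}, b) by Corollary~2, c) by the main result of \cite{7}, and d) by the main result of \cite{3}. Combining the two cases, each of a), b), c), d) is equivalent to the statement that $G$ is a cyclic $p$-group of order at least $p^2$ or a generalized quaternion $2$-group, and the four conditions are therefore equivalent.
\end{proof}
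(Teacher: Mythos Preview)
Your overall strategy is exactly what the paper does: it offers no separate proof of this corollary, simply remarking that it follows by ``bringing together Theorem 1 and the main results of \cite{2,3,7}''. Reducing each of a)--d) to the explicit classification (cyclic $p$-groups of order $\geq p^2$ or generalized quaternion $2$-groups) and then reading off the equivalence is the intended argument.

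There is, however, a small logical slip in your treatment of the cyclic case. You write that if $G$ is cyclic but not a $p$-group of order at least $p^2$, then ``$G$ has two maximal subgroups of coprime index, which form an antichain in each of the four posets, so none of them has a breaking point.'' First, this overlooks $G$ trivial or of prime order, where there are no two maximal subgroups (though those cases are vacuous anyway). More importantly, the inference ``there is an antichain of coatoms, hence no breaking point'' is not valid: in the poset $\{0<a<M_1,M_2<1\}$ with $M_1,M_2$ incomparable, $a$ is a breaking point. A correct quick argument for cyclic $G$ with $|\pi(G)|\geq 2$ is to test a putative breaking point $H$ against the Sylow subgroups: for each prime $q\mid |G|$ the Sylow $q$-subgroup $Q$ must satisfy $Q\leq H$ (since $H\leq Q$ would force $|H|$ to be a $q$-power, contradicting comparability with a Sylow subgroup for a different prime), whence $H=G$. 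With this fix your proof goes through and matches the paper's approach.
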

\medskip

In what follows we will denote by $\cal C$ the class of finite groups $G$ satisfying $$\overline{L}(G)=[[1],[M]]\cup[[N],[G]] \mbox{ with } 1<M,N<G.\leqno(4)$$\newpage

\noindent Clearly, (4) is equivalent with (3) for abelian groups $G$ because in this case we have $\overline{L}(G)=L(G)$. Then all finite abelian groups satisfying (3) belong to $\cal C$. So, we may restrict our attention only to finite non-abelian groups contained in $\cal C$. By Theorem 1, we know that the generalized quaternion $2$-groups $Q_{2^n}$, $n\geq 3$, have this property. We also observe that there are many examples of groups of small order contained in $\cal C$, such us the symmetric group $S_3$ (for which we can choose $M=\langle(1\,2)\rangle$ and $N=\langle(1\,2\,3)\rangle$), the dihedral group $D_{10}=\langle x,y\,|\,x^5=y^2=1, x^y=x^{-1}\rangle$ (for which we can choose $M=\langle x\rangle$ and $N=\langle y\rangle$), and the alternating group $A_4$ (for which we can choose $M=\{(1),(1\,2)(3\,4),(1\,3)(2\,4),(1\,4)(2\,3)\}$ and $N=\langle(1\,2\,3)\rangle$).

First of all, we will focus on the finite $p$-groups $G$ contained in $\cal C$. Observe that we can take $M$ to be a maximal subgroup of $G$ and $N$ to be a minimal subgroup of $G$. Then $M$ contains any minimal subgroup different from $N$, while $N$ is contained in any maximal subgroup different from $M$. These remarks lead to
\begin{itemize}
\item[1.] $\Omega_1(G)\subseteq M$;
\item[2.] $N\subseteq\Phi(G)$ (and, in particular, $N\subseteq M$).
\end{itemize}An example and a non-example of such a group are given by the following two propositions.

\begin{proposition}
    The modular $p$-group $M(p^n)=\langle x,y\,|\,x^{p^{n-1}}=y^p=1, x^y=x^{p^{n-2}+1}\rangle$ belongs to $\cal C$.
\end{proposition}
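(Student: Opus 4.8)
The plan is to exhibit explicit $M$ and $N$ realizing condition (4) and then verify that every conjugacy class of subgroups lies in $[[1],[M]]$ or in $[[N],[G]]$. The natural choice, guided by the two bullet points preceding the proposition, is to take $N=\langle x^{p^{n-2}}\rangle$, the unique minimal subgroup contained in the Frattini subgroup $\Phi(M(p^n))=\langle x^p\rangle$ (here I would first recall the standard structure facts about $M(p^n)$: it has a cyclic maximal subgroup $\langle x\rangle$, its center is $\langle x^p\rangle$ for $n\ge 3$, and $x^{p^{n-2}}$ generates the socle of that cyclic part), and to take $M$ to be a suitably chosen maximal subgroup containing $\Omega_1(M(p^n))$. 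A key preliminary observation I would make is that $M(p^n)$ is \emph{not} a $2$-group of maximal class unless it is very small, so in the interesting range $\Omega_1$ is elementary abelian of rank $2$; I would pick $M=\Omega_1(G)\cdot\langle x^p\rangle$ or, equivalently, the unique maximal subgroup that is abelian (for $p$ odd this is $\langle x^{p}, y\rangle\cdot\langle\,\rangle$-type; I would nail down the exact generators in the writeup). The crucial structural feature of $M(p^n)$ that makes this work is that it is \emph{abelian-by-cyclic of a very rigid kind}: all of its subgroups are normal when $p$ is odd? No — rather, its poset of conjugacy classes of subgroups is well understood, and I would lean on the classical description of $L(M(p^n))$ in terms of the isomorphism $L(M(p^n))\cong L(\mathbb{Z}_{p^{n-1}}\times\mathbb{Z}_p)$ as \emph{lattices} (Schmidt's book), modified by the conjugacy relation.

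The main steps, in order, would be: (i) fix notation and recall that in $M(p^n)$ every subgroup is either contained in the cyclic maximal subgroup $\langle x\rangle$ or contains the commutator subgroup $\langle x^{p^{n-2}}\rangle=G'$; this dichotomy is exactly the engine of the proof. (ii) Set $N=G'=\langle x^{p^{n-2}}\rangle$ (order $p$, central, hence $\langle N\rangle$ is a single conjugacy class) and argue that any subgroup $K$ with $[N]\not\le[K]$ must have $K\cap G'=1$, which forces $K\le\langle x\rangle$ since $K$ embeds in $G/G'$ only if it meets $G'$ trivially and... — more carefully, I would show $KG'/G'$ has the same order as $K$, and combine with the subgroup structure to conclude $K$ is contained in the unique maximal subgroup $M$ I have chosen. (iii) Conversely show $\Omega_1(G)\le M$ so that every subgroup containing $G'$ but... no: show directly that every subgroup $K$ with $K\not\le M$ satisfies $G'\le K$, hence $[N]\le[K]$. (iv) Conclude $\overline{L}(G)=[[1],[M]]\cup[[N],[G]]$ with $1<M,N<G$, so $G\in\mathcal{C}$.

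I expect step (ii)–(iii) — the clean dichotomy ``$K\le\langle x\rangle$ or $G'\le K$'' for subgroups of $M(p^n)$ — to be the heart of the argument; once that is in hand, choosing $M$ to be the maximal subgroup containing both $\langle x^p\rangle$ and a complement-type element so that $\Omega_1(G)\subseteq M$ and $\langle x\rangle\le$? — wait, $\langle x\rangle$ itself is maximal, so I should instead take $M=\langle x\rangle$ when $\Omega_1(G)\subseteq\langle x\rangle$, i.e.\ when the unique subgroup of order $p$ in $\langle x\rangle$ is the whole socle; but for $p$ odd $\Omega_1(M(p^n))$ has order $p^2$, so $\Omega_1(G)\not\le\langle x\rangle$ and I genuinely need the \emph{other} maximal subgroup, namely the abelian one $A=\langle x^p,y\rangle\cong\mathbb{Z}_{p^{n-1}}\times$? — here $A$ has order $p^{n-1}$ and contains $\Omega_1(G)$; then the dichotomy becomes ``$K\le A$ or $G'\le K$,'' which I would verify using that $G/A\cong\mathbb{Z}_p$ acts trivially, i.e.\ $A\unlhd G$ with $G/A$ of order $p$ and $A\ge G'$. \textbf{The hard part} will be handling the boundary between the two intervals cleanly — subgroups $K$ with both $K\le A$ \emph{and} $G'\le K$ are fine, but I must make sure no conjugacy class is \emph{missed}, i.e.\ that no subgroup sits outside $A$ while failing to contain $G'$; for $p=2$ I would double-check $M(2^n)$ separately against the small cases ($M(2^3)\cong D_8$) since there the generalized-quaternion/dihedral exceptional behaviour could interfere, and I anticipate that $M(8)=D_8$ requires the explicit choice $M=\langle x\rangle$, $N=Z(D_8)$ verified by hand.
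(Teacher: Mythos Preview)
Your eventual choice $N=G'=\langle x^{p^{n-2}}\rangle$ and $M=A=\langle x^p,y\rangle$ is exactly what the paper does, and the engine you arrive at --- every subgroup either contains $G'$ or lies in $A$ --- is equivalent to the paper's description of $L(M(p^n))$ as the interval $[G',G]$ together with the trivial subgroup and the $p$ minimal non-normal subgroups (all of which sit inside $\Omega_1(G)=\langle x^{p^{n-2}},y\rangle\subseteq A$). So the core of your plan matches the paper. Two points deserve correction, though.

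First, your step (i) dichotomy ``$K\le\langle x\rangle$ or $G'\le K$'' is false: $\langle y\rangle$ is a subgroup of order $p$ that neither lies in $\langle x\rangle$ nor contains $G'$. You do abandon this formulation later in favour of $A=\langle x^p,y\rangle$, which is the right move, but the writeup should not present the $\langle x\rangle$ version as the ``engine of the proof.''

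Second, and more seriously, your treatment of $p=2$, $n=3$ is wrong. You correctly observe that $M(8)\cong D_8$, but then assert that $D_8$ can be handled by taking $M=\langle x\rangle$ and $N=Z(D_8)$. This fails: with that choice the class $[\langle y\rangle]$ lies in neither $[[1],[M]]$ (since $\langle y\rangle\not\le\langle x\rangle$ and $\langle x\rangle\trianglelefteq G$) nor $[[N],[G]]$ (since $\langle x^2\rangle\not\le\langle y\rangle$). In fact Proposition~5 of the paper shows that no choice works: $D_{2^n}\notin\mathcal{C}$ for any $n$. The paper's proof implicitly uses $\Omega_1(M(p^n))\cong\mathbb{Z}_p\times\mathbb{Z}_p$, which already excludes $M(8)$; the standard convention is that the ``modular $p$-group'' means $n\ge 4$ when $p=2$, and you should state that restriction explicitly rather than claim the result for $D_8$.
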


\begin{proof}
    It is well-known that $M(p^n)'=\langle x^q\rangle$, where $q=p^{n-2}$. Moreover, $M(p^n)/M(p^n)'\cong\mathbb{Z}_p\times\mathbb{Z}_{p^{n-2}}$. We also have $$\Omega_1(M(p^n))=\langle x^q, y\rangle\cong\mathbb{Z}_p\times\mathbb{Z}_p \mbox{ and } \Phi(M(p^n))=\langle x^p\rangle.$$Then $M(p^n)$ contains $p+1$ minimal subgroups and the join of any two distinct such subgroups includes $M(p^n)'$. We infer that $L(M(p^n))$ consist of the interval $[M(p^n)',M(p^n)]$, of $p$ minimal non-normal subgroups, and of the trivial subgroup. By taking $$M=\langle x^p, y\rangle\supseteq\Omega_1(M(p^n)) \mbox{ and } N=\langle x^q\rangle\subseteq\Phi(M(p^n)),$$it follows that $$\overline{L}(M(p^n))=[[1],[M]]\cup[[N],[M(p^n)]],$$as desired.
\end{proof}

\begin{proposition}
    The dihedral $2$-group $D_{2^n}=\langle x,y\,|\,x^{2^{n-1}}=y^2=1, x^y=x^{-1}\rangle$ does not belong to $\cal C$.
\end{proposition}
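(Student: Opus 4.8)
The plan is to show that for $D_{2^n}$ no choice of proper nontrivial $M$ and $N$ can realize $\overline{L}(D_{2^n})=[[1],[M]]\cup[[N],[G]]$, by exploiting the structural constraints recorded just above: any valid $M$ must contain $\Omega_1(G)$ and any valid $N$ must lie in $\Phi(G)$. First I would recall the relevant subgroup structure of $D_{2^n}$: writing $G=D_{2^n}$, the cyclic subgroup $\langle x\rangle$ of order $2^{n-1}$ is normal, $\Phi(G)=\langle x^2\rangle$, and $\Omega_1(G)=\langle x^{2^{n-2}},y\rangle$ is a Klein four-group when $n\geq 3$ (for $n=3$, $G$ is itself $D_8$ with $\Omega_1$ of order $4$; the case $n\le 3$ can be checked directly or folded into the general argument). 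Crucially $G$ has \emph{two} conjugacy classes of non-central involutions (the reflections $\{x^{2i}y\}$ and $\{x^{2i+1}y\}$) together with the central involution $x^{2^{n-2}}$, so there are at least three conjugacy classes of subgroups of order $2$ and hence at least three atoms in $\overline{L}(G)$.

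The key step is then a counting/covering obstruction. Since $N\subseteq\Phi(G)$, the atom $[N]$ is $[\langle x^{2^{n-2}}\rangle]$ — the unique minimal subgroup inside the cyclic part — because $\Phi(G)=\langle x^2\rangle$ is cyclic and contains a unique subgroup of order $2$. The two classes of reflection subgroups $[\langle y\rangle]$ and $[\langle xy\rangle]$ are therefore atoms \emph{not} equal to $[N]$, so by the union hypothesis both must lie in $[[1],[M]]$, i.e. $\langle y\rangle$ and $\langle xy\rangle$ must (up to conjugacy) be contained in $M$. But $\langle y,xy\rangle=\langle x,y\rangle=G$ already when we also note that conjugates of $y$ and $xy$ generate $G$ (the reflections generate the whole group), forcing $M=G$ — contradicting $M<G$. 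To make this airtight I would verify that $M$ being a proper subgroup that contains a conjugate of $\langle y\rangle$ and a conjugate of $\langle xy\rangle$ is impossible: every proper subgroup of $D_{2^n}$ is either cyclic (inside $\langle x\rangle$, hence contains no reflection) or dihedral of the form $\langle x^{2^k}, x^j y\rangle$, and such a dihedral subgroup, being contained in a proper subgroup, meets only \emph{one} of the two reflection classes of $G$ — this is the cleanest formulation and where a short explicit check with coset representatives is needed.

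An alternative, perhaps slicker route avoids picking $M,N$ minimal/maximal and instead argues purely at the level of $\overline{L}(G)$: if $\overline{L}(G)=[[1],[M]]\cup[[N],[G]]$ then every atom of $\overline{L}(G)$ lies in one of the two intervals, an atom in $[[N],[G]]$ must equal $[N]$, and an atom in $[[1],[M]]$ must be $\le[M]$; since there are at least three atoms and at most one equals $[N]$, at least two distinct atoms are $\le [M]$, and one shows their join (a well-defined class here since one can take $N$ itself central) already forces $[M]=[G]$. I would present the first version as the main proof since it connects directly to the remarks preceding the proposition.

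\textbf{Main obstacle.} The only real work is the claim that a proper subgroup of $D_{2^n}$ cannot contain representatives of both reflection conjugacy classes; everything else is bookkeeping about $\Phi$, $\Omega_1$, and atoms. I expect this to come down to the observation that conjugation by $x$ permutes the $x^{2i}y$ among themselves and the $x^{2i+1}y$ among themselves, so the two classes are distinguished by the parity of the exponent, a parity that is invariant under conjugation by any element of a proper subgroup (all of which are generated by even powers of $x$ together with at most one reflection) — making the incompatibility transparent. If one prefers to keep $n=3$ separate, $D_8$ can simply be inspected by hand.
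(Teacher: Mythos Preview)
Your longer argument via $N\subseteq\Phi(G)$ and the two reflection classes is correct, but you have a factual error that causes you to miss the one-line proof the paper actually gives. You write that $\Omega_1(D_{2^n})=\langle x^{2^{n-2}},y\rangle$ is a Klein four-group; this is false. Every reflection $x^iy$ is an involution, so in particular $y$ and $xy$ both lie in $\Omega_1(D_{2^n})$, and hence $x=(xy)y\in\Omega_1(D_{2^n})$. Thus $\Omega_1(D_{2^n})=D_{2^n}$. Combined with the constraint $\Omega_1(G)\subseteq M$ that you yourself invoke from the preceding remarks, this forces $M=G$ immediately, and that is the entirety of the paper's proof.

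Your route---identifying $[N]=[\langle x^{2^{n-2}}\rangle]$ from $N\subseteq\Phi(G)=\langle x^2\rangle$, then arguing that $[\langle y\rangle]$ and $[\langle xy\rangle]$ are distinct atoms that must both lie under $[M]$, and finally checking that no proper subgroup of $D_{2^n}$ meets both reflection classes---is sound (the three maximal subgroups are $\langle x\rangle$, $\langle x^2,y\rangle$, $\langle x^2,xy\rangle$, and each contains reflections of at most one parity). But it is precisely the observation that a subgroup containing reflections of both parities must be all of $G$ that makes $\Omega_1(G)=G$; you have in effect reproved this fact by a detour through the poset of conjugacy classes rather than stating it directly. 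Correcting the computation of $\Omega_1$ collapses your argument to the paper's.
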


\begin{proof}
    Assume that $D_{2^n}$ belongs to $\cal C$ and let $M,N$ be two proper non-trivial subgroups of $D_{2^n}$ such that
    $$\overline{L}(D_{2^n})=[[1],[M]]\cup[[N],[D_{2^n}]]$$with $M$ maximal and $N$ minimal. Then $\Omega_1(D_{2^n})\subseteq M$.
    But $y, xy\in\Omega_1(D_{2^n})$, and so $x=(xy)y\in\Omega_1(D_{2^n})$, implying that $\Omega_1(D_{2^n})=D_{2^n}$. Thus
    $M=D_{2^n}$, a contradiction.
\end{proof}

Our next theorem gives a sufficient condition for a finite solvable group which is not a $p$-group to be contained in $\cal C$.

\begin{theorem}
    Let $G$ be a finite solvable group with $|\pi(G)|\geq 2$. If there is a prime $p\in\pi(G)$ such that all subgroups of order
    $p$ of $G$ are conjugate, then $G$ belongs to $\cal C$. In particular, if a Sylow subgroup of $G$ is cyclic or a generalized
    quaternion $2$-group, then $G$ belongs to $\cal C$.
\end{theorem}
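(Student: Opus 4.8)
The plan is to produce explicit witnesses $M$ and $N$ for the decomposition (4). Fix a prime $p\in\pi(G)$ with the stated property and write $|G|=p^am$ with $\gcd(p,m)=1$; since $|\pi(G)|\geq 2$ we have $a\geq 1$ and $m\geq 2$. I would take $N$ to be any subgroup of $G$ of order $p$ (one exists by Cauchy's theorem) and $M$ to be a Hall $p'$-subgroup of $G$, which exists precisely because $G$ is solvable. Then $1<N<G$, and since $|M|=m$ with $1<m<|G|$ we also have $1<M<G$, so $M$ and $N$ are admissible candidates for (4).

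The core of the argument consists of two identifications of intervals of $\overline{L}(G)$. First, I claim that $[[1],[M]]=\{[K]\mid K\leq G,\ p\nmid|K|\}$. Indeed, if $[K]\leq[M]$ then $K$ is contained in a conjugate of $M$, so $|K|$ divides $m$ and $p\nmid|K|$; conversely, if $p\nmid|K|$ then $K$ is a $p'$-subgroup of the solvable group $G$, hence is contained in some Hall $p'$-subgroup, which by the conjugacy of Hall $p'$-subgroups equals $M^x$ for a suitable $x\in G$, so $[K]\leq[M]$. Second, I claim that $[[N],[G]]=\{[K]\mid K\leq G,\ p\mid|K|\}$: by definition $[N]\leq[K]$ means exactly that $K$ contains a conjugate of $N$, and since by hypothesis every subgroup of order $p$ of $G$ is conjugate to $N$, this is equivalent to $K$ containing a subgroup of order $p$, which by Cauchy's theorem happens if and only if $p\mid|K|$.

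Combining the two identifications, every conjugacy class $[K]$ of subgroups of $G$ lies in $[[1],[M]]$ when $p\nmid|K|$ and in $[[N],[G]]$ when $p\mid|K|$, so $\overline{L}(G)=[[1],[M]]\cup[[N],[G]]$ and $G\in\calc$. For the final assertion, if a Sylow $p$-subgroup $P$ of $G$ is cyclic or a generalized quaternion $2$-group (so $p=2$ in the latter case), then $P$ has a unique subgroup of order $p$ (see (4.4) of \cite{6}, II, for the quaternion case); any subgroup of order $p$ of $G$ lies in a Sylow $p$-subgroup, which is a conjugate of $P$, hence is a conjugate of the unique order-$p$ subgroup of $P$. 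Thus all subgroups of order $p$ of $G$ are conjugate and the first part of the theorem applies.

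As for the difficulty: there is no real obstacle beyond correctly invoking Hall's theorems, and the full strength of the solvability hypothesis is used exactly there — it guarantees both the existence of a Hall $p'$-subgroup $M$ and the fact that every $p'$-subgroup is conjugate into it, which is what makes the lower interval $[[1],[M]]$ capture precisely the $p'$-subgroups. The only points requiring care are the verifications that $M$ and $N$ are proper and nontrivial (this is where $|\pi(G)|\geq 2$ is needed) and the faithful translation of the relations $[N]\leq[K]$ and $[K]\leq[M]$ through the definition of the ordering on $\overline{L}(G)$.
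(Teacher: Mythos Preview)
Your proof is correct and follows essentially the same route as the paper: you choose $N$ to be a subgroup of order $p$ and $M$ to be a Hall $p'$-subgroup (what the paper calls a $p$-complement), and then use Hall's theorems together with the conjugacy hypothesis to show that every subgroup class lands in one of the two intervals. Your write-up is in fact a bit more complete, since you spell out why $M$ and $N$ are proper and nontrivial and you supply the argument for the ``in particular'' clause, which the paper leaves implicit.
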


\begin{proof}
    Let $p\in\pi(G)$ such that all subgroups of order $p$ of $G$ are conjugate. Pick a subgroup $N\leq G$ of order $p$, and a $p$-complement $M$ in $G$. We will prove that $$\overline{L}(G)=[[1],[M]]\cup[[N],[G]].$$Let $m=|M|$. Then $p\nmid m$, any two subgroups of order $m$ of $G$ are conjugate, and any subgroup of $G$ of order dividing $m$ is contained in a subgroup of order $m$. Let $H\leq G$. If $|H|\mid m$, then $H$ is contained in a conjugate of $M$, that is $[H]\leq [M]$. If $|H|\nmid m$, then $p\mid |H|$ and so $H$ contains a subgroup of order $p$ of $G$. By hypothesis, there is $x\in G$ such that $N^x\subseteq H$, or equivalently $N\subseteq H^{x^{-1}}$, proving that $[N]\leq [H]$.
\end{proof}

Note that the condition in Theorem 6 is not necessary for a finite solvable group $G$ to be contained in $\cal C$: for example, $G=\mathbb{Z}_2^2\times M(3^3)$ belongs to $\cal C$ (see Theorem 9 below), but its subgroups of a fixed prime order are not conjugate. Also, we cannot remove the hypothesis that $G$ is solvable: for example, $G=A_5$ has all subgroups of order $3$ (as well as all subgroups of order $5$) conjugate, but it does not belong to $\cal C$.
\medskip

An important class of groups contained in $\cal C$ is indicated in the following corollary. Recall that a finite group is called a \textit{ZM-group} if all its Sylow subgroups are cyclic.

\begin{corollary}
    Any ZM-group belongs to $\cal C$.
\end{corollary}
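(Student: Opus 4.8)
The plan is to deduce the corollary directly from Theorem 6 and Theorem 1, splitting into cases according to $|\pi(G)|$.

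\emph{Case $|\pi(G)|\geq 2$.} Here I would first note that a ZM-group is solvable: a finite group all of whose Sylow subgroups are cyclic is metacyclic (H\"older--Burnside--Zassenhaus), hence solvable. Then it suffices to invoke the ``in particular'' clause of Theorem 6, since a Sylow subgroup of $G$ is cyclic, to conclude $G\in\calc$. If one prefers to go through the main statement of Theorem 6 instead, one checks its hypothesis for some $p\in\pi(G)$: a Sylow $p$-subgroup $P$ is cyclic, hence contains a unique subgroup of order $p$; since every subgroup of order $p$ of $G$ lies in some conjugate of $P$ and each such conjugate also has a unique subgroup of order $p$, all subgroups of order $p$ of $G$ are conjugate, and Theorem 6 applies.

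\emph{Case $|\pi(G)|=1$.} Then $G$ is a cyclic $p$-group. If $|G|\geq p^2$, Theorem 1 gives that $\overline{L}(G)=L(G)$ has a breaking point $H$ with $1<H<G$; taking $M=N=H$ yields $\overline{L}(G)=[[1],[H]]\cup[[H],[G]]$, so $G\in\calc$. The cyclic $p$-groups of order at most $p$ (and the trivial group) are the only ZM-groups not covered by the two cases; they are degenerate for property (4) and are tacitly excluded throughout.

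Since the argument merely concatenates results already established in the excerpt, I do not anticipate a genuine obstacle. The single point that needs a sentence of care is the case $|\pi(G)|\geq 2$: one must record that ``all Sylow subgroups cyclic'' forces solvability (so that Theorem 6 is applicable), and — if one routes the argument through the main hypothesis of Theorem 6 rather than its ``in particular'' clause — that cyclicity of a Sylow $p$-subgroup together with Sylow's theorem forces all subgroups of order $p$ of $G$ to be conjugate.
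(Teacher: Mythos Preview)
Your proof is correct and matches the paper's intended derivation: the corollary is stated there without proof immediately after Theorem 6, the implicit argument being the ``in particular'' clause (cyclic Sylow subgroups) together with the standard fact that ZM-groups are solvable. You are simply more thorough than the paper in separately treating the case $|\pi(G)|=1$ via Theorem 1 and in flagging the degenerate exceptions $\mathbb{Z}_p$ and the trivial group, which the paper tacitly ignores.
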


Also, by using Theorem 6, we are able to determine all finite dihedral groups contained in $\cal C$.

\begin{corollary}
    The dihedral group $D_{2n}$ belongs to $\cal C$ if and only if $n$ is not a power of $2$.
\end{corollary}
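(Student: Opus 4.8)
The plan is to split the argument into the two implications. For the easy direction, suppose $n$ is not a power of $2$, so $n$ has an odd prime divisor. It suffices to exhibit a prime $p\in\pi(D_{2n})$ all of whose order-$p$ subgroups are conjugate, and then invoke Theorem 6. The natural candidate is an odd prime $p\mid n$: writing $D_{2n}=\langle x,y\mid x^n=y^2=1,\ x^y=x^{-1}\rangle$, the unique subgroup of the cyclic group $\langle x\rangle$ of order $p$ is characteristic in $\langle x\rangle$, hence normal in $D_{2n}$, and it is the only subgroup of order $p$ in the whole group since any element of order $p$ must lie in $\langle x\rangle$ (elements outside $\langle x\rangle$ have order $2$). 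So that subgroup of order $p$ is normal, in particular all order-$p$ subgroups are conjugate, and Theorem 6 gives $D_{2n}\in\cal C$.

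For the converse, suppose $n=2^k$ is a power of $2$. If $k=0$ then $D_{2n}\cong\mathbb{Z}_2$ is trivial/too small, and if $k=1$ then $D_4\cong\mathbb{Z}_2\times\mathbb{Z}_2$; for $k\ge 2$ the group $D_{2^{k+1}}$ is the dihedral $2$-group $D_{2^n}$ (in the paper's other notation) and Proposition 5 already shows it does not belong to $\cal C$. So the main content is just the observation that ``$n$ a power of $2$'' forces $D_{2n}$ to be either a dihedral $2$-group covered by Proposition 5 or one of the two tiny groups $\mathbb{Z}_2$, $\mathbb{Z}_2^2$ which are not in $\cal C$ (since membership requires proper nontrivial $M,N$, impossible when $G$ has no proper nontrivial subgroup, and $\mathbb{Z}_2^2=D_4$ is the $n=2$ case, a $2$-group not of the form needed — one checks directly it fails, or simply notes it has rank $2$ and $\Omega_1(G)=G$). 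I would phrase the converse as: if $D_{2n}\in\cal C$, then (being either abelian or a genuine dihedral $2$-group) $n$ cannot be a power of $2$, citing Proposition 5 for the non-abelian case.

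The only real subtlety — the ``main obstacle'' such as it is — is bookkeeping around the edge cases $n=1,2$ and the clash of notations ($D_{2n}$ here versus $D_{2^n}$ in Proposition 5): one must be careful that the group called $D_{2n}$ with $n=2^k$, $k\ge 2$, is exactly the group called $D_{2^{k+1}}$ in Proposition 5, so that Proposition 5 applies. Everything else is immediate from Theorem 6 and Proposition 5. Concretely, the write-up would be: if $n$ is not a power of $2$, pick an odd prime $p\mid n$; the subgroup of order $p$ of $\langle x\rangle$ is the unique such subgroup of $D_{2n}$, so Theorem 6 applies; conversely, if $n$ is a power of $2$ then $D_{2n}$ is abelian of order at most $4$ (hence not in $\cal C$, as it lacks suitable $M,N$, or is the $n=2$ case handled by Proposition 5 with the appropriate index) or a dihedral $2$-group, which by Proposition 5 is not in $\cal C$.
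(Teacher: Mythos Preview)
Your proposal is correct and follows essentially the same route as the paper: invoke Theorem~6 via an odd prime divisor of $n$ for one direction, and Proposition~5 for the other. The only cosmetic differences are that the paper appeals to the ``cyclic Sylow'' clause of Theorem~6 rather than verifying directly that the order-$p$ subgroup is unique, and that it does not pause over the small cases $n=1,2$ (which are absorbed into Proposition~5 anyway, since its $\Omega_1$ argument still goes through).
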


\begin{proof}
    Let $n=2^km$, where $2\nmid m$. According to Proposition 5, it suffices to show that  $D_{2n}$ belongs to $\cal C$ if $m\neq 1$. Let $p$ be a prime diving $m$. Since $p$ is odd, $D_{2n}$ has a unique Sylow $p$-subgroup and this is cyclic. Therefore the conclusion follows from Theorem 6.
\end{proof}

    Finally, we remark that for finite nilpotent groups the condition in Theorem 6 can be weakened.
    
\begin{theorem}
    Let $G$ be a finite nilpotent group. If there is a prime $p\in\pi(G)$ such that a Sylow $p$-subgroup of $G$ belongs to $\cal C$, 
    then $G$ also belongs to $\cal C$.
\end{theorem}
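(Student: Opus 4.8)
The plan is to reduce the nilpotent case to the $p$-group case handled by the hypothesis, using the fact that a finite nilpotent group $G$ is the direct product $G = P \times Q$ of its Sylow $p$-subgroup $P$ and the product $Q$ of its remaining Sylow subgroups, together with the structural description of $\overline{L}$ of a direct product of groups of coprime order: every subgroup of $G$ splits uniquely as $H = H_P \times H_Q$ with $H_P \leq P$, $H_Q \leq Q$, and conjugation respects this splitting, so $[H] \leq [K]$ in $\overline{L}(G)$ if and only if $[H_P] \leq [K_P]$ in $\overline{L}(P)$ and $[H_Q] \leq [K_Q]$ in $\overline{L}(Q)$. In other words, $\overline{L}(G) \cong \overline{L}(P) \times \overline{L}(Q)$ as posets (the direct product of posets). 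So the real content is: if $\overline{L}(P)$ is a union of two proper intervals $[[1],[M_0]] \cup [[N_0],[P]]$ and $Q$ is any nontrivial finite group, then $\overline{L}(P) \times \overline{L}(Q)$ is a union of two proper intervals.

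First I would pin down the building block: for any finite group $Q$ with $|Q| > 1$ one has $\overline{L}(Q) = [[1],[Q]]$ trivially as an interval, but that by itself only gives $\overline{L}(G) = [[1],[M_0]\times[Q]] \cup [[N_0]\times[1],[G]]$ — which already looks like what we want, but we must check that the two ``endpoints'' $M_0 \times Q$ and $N_0 \times 1$ are proper and nontrivial in $G$. Since $1 < M_0 < P$ we get $1 < M_0 \times Q < P \times Q = G$, and since $1 < N_0 < P$ and $Q \neq 1$ we get $1 < N_0 \times 1 < G$; both are fine. The second step is to verify the covering identity itself: take any $H = H_P \times H_Q \leq G$. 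Apply the hypothesis to $H_P \leq P$: either $[H_P] \leq [M_0]$ or $[N_0] \leq [H_P]$ in $\overline{L}(P)$. In the first case $[H] = [H_P]\times[H_Q] \leq [M_0]\times[Q]$ since trivially $[H_Q] \leq [Q]$; in the second case $[N_0]\times[1] \leq [H_P]\times[H_Q] = [H]$ since trivially $[1] \leq [H_Q]$. Hence $\overline{L}(G) = [[1],[M]] \cup [[N],[G]]$ with $M = M_0 \times Q$ and $N = N_0 \times 1$, and these witnesses satisfy $1 < M, N < G$. That completes the argument.

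The one genuine point requiring care — and the step I expect to be the main obstacle to write cleanly — is the claim that the poset $\overline{L}(G)$ of a coprime direct product is the direct product of the factor posets, i.e.\ that conjugacy classes of subgroups of $P \times Q$ correspond bijectively and order-isomorphically to pairs of conjugacy classes. The bijection on subgroups (Goursat-type argument specialized to coprime orders, so no ``twist'' survives: $\gcd(|P|,|Q|)=1$ forces every subgroup of $P\times Q$ to be a direct product of a subgroup of $P$ with a subgroup of $Q$) is standard; one then checks that $(H_P \times H_Q)^{(x,y)} = H_P^x \times H_Q^y$, so two subgroups are $G$-conjugate iff their $P$- and $Q$-parts are respectively conjugate, and similarly $H \leq K^{(x,y)}$ iff $H_P \leq K_P^x$ and $H_Q \leq K_Q^y$, from which the order-isomorphism $\overline{L}(P\times Q)\cong\overline{L}(P)\times\overline{L}(Q)$ follows. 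With this lemma in hand the rest is the short bookkeeping above; I would state the coprime-splitting fact as a preliminary remark (citing the standard references \cite{4,6} for the subgroup structure of direct products) rather than belabor it, since it is well known.
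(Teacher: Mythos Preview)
Your proof is correct and follows essentially the same route as the paper: reduce to a coprime direct product $G=G_1\times G_2$ with $G_1\in\calc$, use that every subgroup of $G$ splits as $H_1\times H_2$, and take $M=M_0\times G_2$, $N=N_0\times 1$ as witnesses. The paper is terser and does not spell out the poset isomorphism $\overline{L}(G)\cong\overline{L}(G_1)\times\overline{L}(G_2)$ or the properness checks, but the argument is the same.
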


\begin{proof}
    Since a finite nilpotent group is the direct product of its Sylow $p$-subgroups, it suffices to prove that if $G_1$, $G_2$ are two finite groups of coprime orders and $G_1$ belongs to $\cal C$, then $G=G_1\times G_2$ also belongs to $\cal C$.
    
    Let $M$ and $N$ be two proper non-trivial subgroups of $G_1$ such that $$\overline{L}(G_1)=[[1],[M]]\cup[[N],[G_1]].$$This implies that $$\overline{L}(G)=[[1],[M\times G_2]]\cup[[N\times 1],[G]].$$Indeed, any subgroup $H$ of $G$ can be written as $H=H_1\times H_2$ with $H_i\leq G_i$, $i=1,2$. We easily infer that if $[H_1]\leq[M]$ then $[H]\leq[M\times G_2]$, while if $[N]\leq[H_1]$ then $[N\times 1]\leq[H]$, completing the proof.
\end{proof}

Note that the condition in Theorem 9 is also not necessary for a finite nilpotent group $G$ to be contained in $\cal C$: for example, $G=Z_6$ belongs to $\cal C$, in contrast with its Sylow subgroups $\mathbb{Z}_2$ and $\mathbb{Z}_3$.
\medskip

    We end our paper by indicating a natural open problem concerning the above study.

\medskip\noindent{\bf Open problem.} Characterize the finite groups which are contained in the class $\cal C$.

\vspace*{5ex}\small

\hfill
\begin{minipage}[t]{5cm}
Marius T\u arn\u auceanu \\
Faculty of  Mathematics \\
``Al.I. Cuza'' University \\
Ia\c si, Romania \\
e-mail: {\tt tarnauc@uaic.ro}
\end{minipage}

\end{document}